\theoremstyle{plain}
\newtheorem{thm}{Theorem}[section]
\newtheorem{lem}[thm]{Lemma}
\newtheorem{prop}[thm]{Proposition}
\newtheorem{cor}[thm]{Corollary}
\newtheorem{ex}[thm]{Example}
\newtheorem{de}[thm]{Definition}
\newcommand{\diag}{{\operatorname{diag}}}
\newcommand{\lcm}{{\operatorname{lcm}}}
\begin{document}
	
	\title[Limiting weak type behaviors of maximal operator]
	{Limiting weak type behaviors of maximal operator on the positive real axis}
	
	\author{Wu-yi Pan and Sheng-jian Li}

	\address{$^1$ Key Laboratory of Computing and Stochastic Mathematics (Ministry of Education),
		School of Mathematics and Statistics, Hunan Normal University, Changsha, Hunan 410081, P.
		R. China}
	
	\email{pwyyyds@163.com (W. Pan)}
	
	\email{carlst@163.com (S. Li)}
	

	\date{\today}
	
	\keywords{Limiting weak type behaviors; Hardy-Littlewood maximal operator}
	
	\thanks{The research is supported in part by the NNSF of China (Nos. 11831007 and 12071125)}

	\subjclass[2010]{Primary 42B25, 47G10.}
	
	\begin{abstract}
		In this note, we establish a discrete method to characterize the limiting weak type behaviors of the centered Hardy-Littlewood maximal operator on the positive real axis through testing on Dirac deltas. As an application, we give some new examples of the limiting weak-type behaviors of the maximal operator associated with several special measures.
		
	\end{abstract}
	
	\maketitle
	
	\section{ Introduction }

	Let $(X,\rho, \mu)$ be a metric measure space, where $\rho$ is a metric and $\mu$ is a Borel measure on metric space $(X,\rho)$. In this paper, we always assume that the measure $\mu$ is finite on each open ball and  the support of $\mu$ is nonempty, and denote a metrically closed ball by $B(x,r)=\{y\in X:\rho(x,y)\leq r\}$. We consider the centered maximal operator $M_{\mu}$ acting on a finite  measure $\nu$ by
	\begin{equation*}
		M_{\mu}\nu(x)=  \sup_{r>r_0(x)}\frac{\nu(B(x,r))}{\mu(B(x,r))},
	\end{equation*}
	here $r_0(x)=\inf\{r>0:\mu(B(x,r))>0\}$. If $\nu$ is given by $\nu(A)=\int_{A}|g|d\mu$ for some integrable functions $g\in L^1(\mu)$ and all Borel subsets $A$ of the $(X,\rho)$, then the maximal operator above reduces to the classical  centered Hardy-Littlewood maximal operator:
	\begin{equation*}
		M_{\mu}g(x):=M_{\mu}\nu(x)=\sup_{r>r_0(x)}\frac{1}{\mu(B(x,r))}\int_{B(x,r)}|g|d\mu.
	\end{equation*}
	Which is one of the two fundamental operators in harmonic analysis, and has a long research history. One of the interesting questions is  to determine what kind of measure on the metric space $(X,\rho)$ satisfies that the maximum operator on it is of weak type $(1,1)$. In the specific case of $({\Bbb R}^n, d, \mu),n\geq 1$, with the metric $d$ generated by $\Vert\cdot\Vert_{2}$ or $\Vert\cdot\Vert_{\infty}$ and an arbitrary locally finite Borel measure $\mu$, it is well known that the centered maximal operator $M_{\mu}$ is of weak type $(1,1)$, with bounds relating to $d$ but independent of the measure, since the spaces have Besicovitch Covering Property (BCP). More historical details are available in \cite{Fe69,He01,Ma95}. A new advance has been made by  Aldaz \cite{Al21},  who proved that $M_\mu$ satisfies weak type $(1,1)$ bounds that are uniform in $\tau$-additive measure $\mu$, if and only if metric measure space has the Besicovitch intersection property (WBCP),  less restrictive than BCP.
	
A more natural question is how about the limiting weak type behavior of maximal operator? It is first systematically answered by Janakiraman \cite{Ja06}, who had proved the excellent result in case of Lebesgue measure: for every finite signed measure $\nu$ on Euclidean space $\mathbb{R}^n$,
	\begin{equation*}
		\lim\limits_{\lambda\to 0^+}\lambda m\{x\in \mathbb{R}^n: M_{m}\nu(x)>\lambda\} = |\nu|(\mathbb{R}^n),
	\end{equation*}
	where we denote by $m$ the Lebesgue measure. Since then, several authors \cite{HH08,HW19} made substantial contribution in this direction. Notably, given a real number $\beta >0$, for power weighed measure $d\mu(x) = |x|^{\beta}dm(x)$ on Euclidean space $\mathbb{R}^n$, Hou and Wu \cite{HW19} showed that
	\begin{equation*}
		\lim\limits_{\lambda\to 0^+}\lambda \mu\{x\in \mathbb{R}^n: M_{\mu}\nu(x)>\lambda\} = \frac{\mu(B(0,1))}{\mu(B(e_1,1))}\nu(\mathbb{R}^n)
	\end{equation*}
	for all absolutely continuous measure $\nu$ with respect to measure $\mu$, among $e_1=(1,0,...,0)$ is a unit vector.  If we use the symbol $\delta_y$ to denote Dirac measure at the point $y$, it should be noted that the coefficient of the right hand side $\frac{\mu(B(0,1))}{\mu(B(e_1,1))}$ equals to
	\begin{equation*}
		\lim\limits_{\lambda\to 0^+}\lambda \mu\{x\in \mathbb{R}^n: M_{\mu}\delta_{0}(x)>\lambda\}.
	\end{equation*}
	Motivated by this, we can continue pose a question: 
	
	\indent {\bf Question:} For a constant $c_0 \geq 0$ and the measure $\mu$ satisfying $\lim\limits_{\lambda\to 0^+}\lambda \mu\{x: M_{\mu}\delta_{y}(x)>\lambda\}=c_0$ with a $y\in (X,\rho)$, does
	$$
	\lim\limits_{\lambda\to 0^+}\lambda \mu\{x: M_{\mu}\nu(x)>\lambda\}=c_0\nu(X)
	$$
	necessarily hold for all finite Radon measure $\nu$?

	In this note, a counterexample is given for the above question. Instead, here we prove the following result that allow us to determine the best constant in the limiting weak type $(1, 1)$ inequality for maximal convolution operators by Dirac deltas. Its complete form is the following theorem.

	\begin{thm}\label{thm:1.1}
		Assume that $\mu$ is a measure restricting on $[0, \infty)$. And let $c_0 \geq0$ be a constant. Then the following are equivalent{\rm:}
		\begin{enumerate}
			[\rm(i)]
			\item 	
			$\lim\limits_{\lambda \to 0^+}\lambda \mu(\{x\geq 0: M_{\mu}\nu(x)>\lambda\}) = c_0\nu(X)$ for all finite Radon measure $\nu${\rm;}
			\item 	$
			\lim\limits_{\lambda \to 0^+}\lambda \mu(\{x\geq 0: M_{\mu}\delta_{y}(x)>\lambda\}) = c_0$ for all $y \in [0,\infty)$.
		\end{enumerate}
	\end{thm}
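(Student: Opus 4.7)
The easy direction (i) $\Rightarrow$ (ii) follows on taking $\nu = \delta_y$, a finite Radon measure with $\nu(X) = 1$. The substantive content is the reverse direction, which I handle first for compactly supported $\nu$ and then extend by truncation.

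Assume $\operatorname{supp}\nu \subseteq [a, b]$. Because $[0,\infty)$ is one-dimensional, every ball $B(x, r)$ is a sub-interval of $[0, \infty)$, and a direct case analysis of which portion of $\operatorname{supp}\nu$ lies in $[x-r, x+r]$ as $r$ varies yields, for $x > b$, the explicit formula
\begin{equation*}
M_\mu \nu(x) \;=\; \sup_{y \in [a, b]} \frac{\nu([y, b])}{\mu([y, 2x-y])},
\end{equation*}
and in particular $M_\mu \delta_a(x) = 1/\mu([a, 2x-a])$. Using $\nu([y, b]) \leq \nu(X)$ and $[b, 2x-b] \subseteq [y, 2x-y]$ for $y \in [a,b]$, this produces the sandwich
\begin{equation*}
\nu(X)\, M_\mu \delta_a(x) \;\leq\; M_\mu \nu(x) \;\leq\; \frac{\nu(X)}{\mu([b, 2x-b])}.
\end{equation*}
In the nontrivial regime $\mu([b,\infty)) = \infty$ (otherwise $c_0 = 0$ and both sides of (i) vanish trivially), the boundary pieces $\mu([a, b])$ and $\mu([2x-b, 2x-a])$ remain bounded while $\mu([b, 2x-b]) \to \infty$, so $\mu([b, 2x-b])/\mu([a, 2x-a]) \to 1$, and hence $M_\mu \nu(x) = \nu(X)\, M_\mu \delta_a(x)\,(1 + o(1))$ as $x \to \infty$.

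Given this asymptotic, fix $\varepsilon \in (0,1)$ and choose $X_0 > b$ so that $(1-\varepsilon)\nu(X)\, M_\mu \delta_a(x) \leq M_\mu \nu(x) \leq (1+\varepsilon)\nu(X)\, M_\mu \delta_a(x)$ for $x > X_0$; the bounded portion $\lambda\mu(\{x \leq X_0 : M_\mu \nu(x) > \lambda\}) \leq \lambda\mu([0,X_0])$ vanishes as $\lambda \to 0^+$, and rescaling $\lambda$ by a factor of $(1 \pm \varepsilon)\nu(X)$ and invoking (ii) at $y = a$ yields $(1-\varepsilon) c_0 \nu(X) \leq \liminf \leq \limsup \leq (1+\varepsilon) c_0 \nu(X)$; letting $\varepsilon \to 0$ settles the compactly supported case. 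For a general finite Radon $\nu$, set $\nu_R := \nu|_{[0,R]}$: monotonicity $M_\mu \nu \geq M_\mu \nu_R$ gives $\liminf_\lambda \lambda\mu(\{M_\mu \nu > \lambda\}) \geq c_0 \nu_R(X) \uparrow c_0 \nu(X)$, while the weak type $(1,1)$ bound for $M_\mu$ recalled in the introduction, applied to $\nu - \nu_R$ together with the splitting $\{M_\mu \nu > \lambda\} \subseteq \{M_\mu \nu_R > (1-\varepsilon)\lambda\} \cup \{M_\mu (\nu - \nu_R) > \varepsilon\lambda\}$, yields the matching upper bound after letting $R \to \infty$ and then $\varepsilon \to 0$. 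The main technical obstacle is the uniform asymptotic $\mu([y, 2x-y])/\mu([a, 2x-a]) \to 1$ in $y \in [a,b]$; once this is established by elementary interval arithmetic exploiting the one-dimensional structure of $[0,\infty)$, everything else is routine bookkeeping.
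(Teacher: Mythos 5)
Your overall strategy --- a two-sided sandwich of $M_{\mu}\nu$ between multiples of $M_{\mu}\delta_a$ and $M_{\mu}\delta_b$ for compactly supported $\nu$, followed by truncation via the weak type $(1,1)$ bound --- is a legitimate and more elementary route than the paper's (which goes through a general metric-space criterion involving $\Delta_{\mu}(k,\lambda)$ and a reduction to the distribution function $H$). The formula $M_{\mu}\nu(x)=\sup_{y\in[a,b]}\nu([y,b])/\mu([y,2x-y])$ for $x>b$ and the resulting two bounds are correct, as is the final truncation step. But there is a genuine gap at exactly the point you call ``the main technical obstacle'': the claim that $\mu([2x-b,2x-a])$ stays bounded, hence $\mu([b,2x-b])/\mu([a,2x-a])\to 1$, by ``elementary interval arithmetic.'' This is false. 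For $d\mu=e^{x}\,dm$ one has $\mu([2x-b,2x-a])\to\infty$ and $\mu([b,2x-b])/\mu([a,2x-a])\to e^{a-b}\neq 1$, even though $\mu([b,\infty))=\infty$. More damningly, the paper's Section 3 counterexample ($H=e^{G}$ with $G$ as in Lemma 3.5) is precisely a measure for which hypothesis (ii) holds at $y=0$ but the ratio $H(s-t)/H(s)$ does not tend to $1$; your argument as written invokes (ii) only at the single point $y=a$, so applied to $\nu=\tfrac12(\delta_0+\delta_2)$ for that measure it would yield a conclusion the paper explicitly refutes.

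The gap is repairable, but it requires an idea you have not supplied: the asymptotic $\mu([b,2x-b])/\mu([a,2x-a])\to 1$ is not elementary geometry but a consequence of using hypothesis (ii) at \emph{two} points. Writing (ii) at $y=a$ and at $y=b$ as $\lim_{r\to\infty}H(r)/H(2r-a)=\lim_{r\to\infty}H(r)/H(2r-b)=c_0$ and dividing gives $\lim_{s\to\infty}H(s-(b-a))/H(s)=1$ --- but only when $c_0>0$. When $c_0=0$ this division is illegitimate and the ratio need not tend to $1$ (again $d\mu=e^x\,dm$); there you must instead discard the lower half of the sandwich and observe that the upper bound $M_{\mu}\nu(x)\le\nu(X)/\mu([b,2x-b])$ together with (ii) at $y=b$ already forces $\limsup_{\lambda}\lambda\mu(\{M_{\mu}\nu>\lambda\})=0$. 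With that case split and the two-point use of (ii), your argument closes; without it, the proof as written is invalid.
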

	
	The paper is organized as below. In Section \ref{Se:2}, we prove  Theorem \ref{thm:1.1}.
	Next in section \ref{Se:3}, we give some new examples of the limiting weak-type behaviors of the maximal operator associated with several special measures and we finish the section by giving a counterexample of the problem mentioned above, which is in contrast to the limiting weak type behavior of maximal operator of the known measures.
	
	\section{The proof of Theorem \ref{thm:1.1}}\label{Se:2}
	 For the sake of simplicity, we	let $\Delta_{\mu}(k,\lambda):=\lambda\mu(\{x\in X:\frac{1}{\mu(B(x,k|x|))}>\lambda\})$.
	\begin{thm}\label{thm:2.1}
	 If $M$ is of the weak type $(1,1)$ and
		\[
		\lim_{k\to 1^{-}}\limsup_{\lambda \to 0^{+}}\Delta_{\mu}(k,\lambda)=\lim_{k\to 1^{+}}\liminf_{\lambda \to 0^{+}}\Delta_{\mu}(k,\lambda)=c_0,
		\]
		then
		$
		\lim_{\lambda \to 0^+}\lambda\mu(\{x\in X: M_{\mu}\nu(x)>\lambda\})=c_{0}\nu(X)
		$ for every finite  measure $\nu$.
		\end{thm}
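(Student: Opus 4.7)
The plan is to exploit the pointwise fact that, for $x$ sufficiently far from the support of $\nu$, the ratio $\nu(B(x,r))/\mu(B(x,r))$ behaves essentially like a scalar multiple of $1/\mu(B(x,|x|))$. I would begin by reducing to compactly supported $\nu$. For a general finite Radon measure $\nu$ on $[0,\infty)$, set $\nu_N := \nu|_{[0,N]}$: the pointwise monotonicity $M_\mu \nu \geq M_\mu \nu_N$ handles the lower bound, while subadditivity $M_\mu \nu \leq M_\mu \nu_N + M_\mu(\nu-\nu_N)$ combined with the weak $(1,1)$ hypothesis applied to the tail $\nu-\nu_N$ (after splitting the level as $\lambda = \alpha\lambda + (1-\alpha)\lambda$) handles the upper bound. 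Letting $N \to \infty$ and then $\alpha \to 1^-$ reduces matters to the case $\mathrm{supp}(\nu) \subset [y^-, y^+]$.

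The heart of the argument is a pointwise sandwich. For $x > y^+$, the map $r \mapsto \nu(B(x,r))$ vanishes for $r < x - y^+$ and equals $\nu(X)$ for $r \geq x - y^-$, while $r \mapsto \mu(B(x,r))$ is nondecreasing. A brief case analysis on where the supremum of the ratio is attained yields
\[
\frac{\nu(X)}{\mu(B(x,\,x - y^-))} \;\leq\; M_\mu \nu(x) \;\leq\; \frac{\nu(X)}{\mu(B(x,\,x - y^+))}.
\]
For $k > 1$ and $x \geq T_k := k y^+/(k-1)$, one checks $x/k \leq x - y^+$ and $x - y^- \leq kx$, so the sandwich strengthens to $\nu(X)/\mu(B(x,kx)) \leq M_\mu \nu(x) \leq \nu(X)/\mu(B(x,x/k))$ on $[T_k,\infty)$. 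Passing to level sets and noting that $\{x < T_k\}$ contributes at most $\lambda\mu([0,T_k]) \to 0$ (since $\mu$ is finite on bounded sets), one obtains
\[
\nu(X)\,\Delta_\mu(k,\lambda/\nu(X)) - \lambda\mu([0,T_k]) \;\leq\; \lambda\mu(\{M_\mu\nu > \lambda\}) \;\leq\; \nu(X)\,\Delta_\mu(1/k,\lambda/\nu(X)) + \lambda\mu([0,T_k]).
\]

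To conclude, observe that the rescaling $\lambda \mapsto \lambda/\nu(X)$ preserves $\liminf$ and $\limsup$ as $\lambda \to 0^+$, so taking $\liminf$ of the left inequality and $\limsup$ of the right gives $\liminf_\lambda \lambda\mu(\{M_\mu\nu > \lambda\}) \geq \nu(X)\liminf_\lambda \Delta_\mu(k,\lambda)$ and $\limsup_\lambda \lambda\mu(\{M_\mu\nu > \lambda\}) \leq \nu(X)\limsup_\lambda \Delta_\mu(1/k,\lambda)$. Letting $k \to 1^+$ (so that $1/k \to 1^-$), the two-sided hypothesis on $\Delta_\mu$ forces both quantities to equal $c_0\nu(X)$, completing the proof. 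The principal technical obstacle is the second step: verifying that the supremum defining $M_\mu\nu(x)$ cannot be attained on radii outside $[x - y^+, x - y^-]$ in a way that breaks the sandwich, and matching the cutoff $T_k$ consistently with the hypothesis parameter $k$; once those choices are pinned down, the remaining manipulations are routine.
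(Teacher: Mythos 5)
Your argument is correct and follows essentially the same route as the paper's: control the mass far from a fixed ball via the weak $(1,1)$ hypothesis, sandwich $M_\mu\nu(x)$ between $\nu(X)/\mu(B(x,k|x|))$ and $\nu(X)/\mu(B(x,|x|/k))$ for $|x|$ large, absorb the bounded region using local finiteness, and then invoke the two-sided hypothesis on $\Delta_\mu$ as $k\to 1^{\pm}$. The only cosmetic difference is that you first truncate $\nu$ to a compactly supported piece and then run an exact sandwich, whereas the paper keeps an $\varepsilon$-fraction of the mass outside $B(0,r_\varepsilon)$ and compensates with the levels $(1\pm\sqrt{\varepsilon})\lambda$; both devices serve the same purpose.
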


	\begin{proof}
		Using the $1$-homogeneity of maximal operator $M$, we can assume that $\nu(X)=1.$ Then for any $0<\varepsilon <1$, there exists a ~$r_{\varepsilon}>0$~ such that
		$
		\nu(B(0,r_{\varepsilon}))>1-\varepsilon.
		$ Set $R_{\varepsilon} =r_{\varepsilon}(1+\frac{1}{\varepsilon}).$	For ~$\lambda>0$, to estimate the total mass of $E_{\lambda} := \{ M\nu(x)>\lambda\}$, we consider to measure the head $E^{1,1}_{(1-\sqrt{\varepsilon})\lambda}:= \{ M\nu_{1}>(1-\sqrt{\varepsilon})\lambda\}\cap B(0,R_\varepsilon)$, the body $E^{1,2}_{(1-\sqrt{\varepsilon})\lambda}:= \{ M\nu_{1}>(1-\sqrt{\varepsilon})\lambda\}\cap B^c(0,R_\varepsilon)$ and the tail $E^{2}_{\sqrt{\varepsilon}} = \{ M\nu_{2}>\sqrt{\varepsilon}\},$ where we denote
	$\nu_{1}=\nu|_{B(0,r_{\varepsilon})}$ and $\nu_{2}=\nu|_{B^c(0,r_{\varepsilon})}$. Obviously, the weak type property of $M$ controls the size of $E^{2}_{\sqrt{\varepsilon}}$, so it is sufficient to handle the body.
		If $|x|>R_{\varepsilon}$, we get
		\[
		M_{\mu}\nu_{1}(x)= \sup_{r>0} \frac{\nu_{1}(B(x,r))}{\mu(B(x,r))} \leq
		\frac{1}{\mu(B(x, |x|- r_{\varepsilon}))}.
		\]
On the other hand, we have $|x|-r_{\varepsilon}>|x|/(1+\varepsilon)$ thus
\begin{equation}\label{eq:2}
\begin{aligned}
\mu(E^{1,1}_{(1-\sqrt{\varepsilon})\lambda})\leq	&		 \mu(\{|x|> R_{\varepsilon}:\frac{1}{\mu(B(x,|x|-r_{\varepsilon}))}>(1-\sqrt{\varepsilon})\lambda\})\\
			\leq	&	 \mu(\{|x|> R_{\varepsilon}:\frac{1}{\mu(B(x,\frac{|x|}{1+\varepsilon}))}>(1-\sqrt{\varepsilon})\lambda\}).
\end{aligned}
\end{equation}
Letting the weak $(1,1)$ constant be $C$, we deduce
		\[
		\begin{aligned}
			\mu(E_{\lambda})\leq& \mu(E^{1,1}_{(1-\sqrt{\varepsilon})\lambda}) +\mu(E^{1,2}_{(1-\sqrt{\varepsilon})\lambda})+\frac{C\sqrt{\varepsilon}}{\lambda}\\
			\leq& \mu(E^{1,1}_{(1-\sqrt{\varepsilon})\lambda}) +\mu(\{|x|\leq R_{\varepsilon}\})+\frac{C\sqrt{\varepsilon}}{\lambda}.
		\end{aligned}
		\]
Now multiplying both sides of \eqref{eq:2} by $\lambda$ and letting $\lambda \to 0^{+}$, we have	
		\[
		\begin{aligned}
			\limsup_{\lambda \to 0^{+}}\lambda\mu(E_{\lambda})\leq &  \limsup_{\lambda \to 0^{+}}\lambda\mu(\{x:\frac{1}{\mu(B(x,\frac{|x|}{1+\varepsilon}))}>(1-\sqrt{\varepsilon})\lambda\})+ C\sqrt{\varepsilon}\\
			\leq& \frac{1}{(1-\sqrt{\varepsilon})} \limsup_{\lambda \to 0^{+}}\Delta_{\mu}(\frac{1}{1+\varepsilon},\lambda)+ C\sqrt{\varepsilon}.
		\end{aligned}
		\]
		Next letting $\varepsilon \to 0$,  we get $\limsup_{\lambda \to 0^{+}}\lambda\mu(E_{\lambda})\leq c_0$ as desired.
		
		We now estimate the lower bound.
		In view of \[ M_{\mu}\nu_{1}(x)= \sup_{r>0} \frac{\nu_{1}(B(x,r))}{\mu(B(x,r))} \geq \frac{1-\varepsilon}{\mu(B(x,|x|+r_{\epsilon}))}\] and ~$|x|+r_{\varepsilon}\leq \frac{2\varepsilon+1}{1+\varepsilon},$~ for ~$|x|>R_{\varepsilon},$ we will get
		\[
		\begin{aligned}
			\mu(E^{1}_{(1+\sqrt{\varepsilon})\lambda})\geq	&		 \mu(\{|x|> R_{\varepsilon}:\frac{1-\varepsilon}{\mu(B(x,|x|+r_{\varepsilon}))}>(1+\sqrt{\varepsilon})\lambda\})\\
			\geq &\mu(\{x:\frac{1-\varepsilon}{\mu(B(x,\frac{(2\varepsilon+1)|x|}{\varepsilon+1}))}>(1+\sqrt{\varepsilon})\lambda\})-\mu(\{|x|\leq R_{\varepsilon}\}).
		\end{aligned}
		\]
In the same manner we can see that
		\[
		\begin{aligned}
			\liminf_{\lambda \to 0^{+}}\lambda\mu(E_{\lambda})\geq &  \liminf_{\lambda \to 0^{+}}\lambda\mu(\{x:\frac{1}{\mu(B(x,\frac{(1+2\varepsilon)|x|}{1+\varepsilon}))}>(1+\sqrt{\varepsilon})\lambda\})- C\sqrt{\varepsilon}\\
			\geq& \frac{1}{(1+\sqrt{\varepsilon})} \liminf_{\lambda \to 0^{+}}\Delta_{\mu}(\frac{1+2\varepsilon}{1+\varepsilon},\lambda)- C\sqrt{\varepsilon}.
		\end{aligned}
		\]
Then $\liminf_{\lambda \to 0^{+}}\lambda\mu(E_{\lambda})\geq c_0$ and hence the desired equality follows combining the preceding result.
	\end{proof}

	The first fact is that $\lim_{k\to 1^{-}}\limsup_{\lambda \to 0^{+}}\Delta_{\mu}(k,\lambda)\leq \limsup_{\lambda \to 0^{+}}\Delta_{\mu}(1,\lambda)\leq \Vert M\Vert_{L^1\mapsto L^{1,\infty}}$ since $\Delta_{\mu}(1,\lambda)=\lambda \mu\{x: M_{\mu}\delta_{0}(x)>\lambda\}$.
 In \cite{St15}, Stempak proved that the modified maximal operators defined by 
	\begin{equation*}
		M_{k,\mu}g(x):=\sup_{r>r_0(x)}\frac{1}{\mu(B(x,kr))}\int_{B(x,r)}|g|d\mu
	\end{equation*} are of weak type (1, 1) with
	the weak type constants equaling to one, which implies $\Delta_{\mu}(k,\lambda)\leq 1$ for $k\geq 2$ in general. 
	
	If $\mu$ is a doubling measure, we record the following simple consequence: Let $C_\mu$ be the doubling constant of the measure $\mu$. Then, for all pairs of radii $0\leq r\leq R$ the inequality 
	\begin{equation*}
		\frac{\mu(B(x,R))}{\mu(B(x,r))}\leq C_\mu\left(\frac{R}{r}\right)^{\log_2 C_{\mu}}
	\end{equation*} holds true for all $x\in X.$ A basic argument shows that the  repeated limits appearing in the condition of Theorem \ref{thm:2.1} are all finite. Meanwhile, $\lim_{k\to 1^{-}}\limsup_{\lambda \to 0^{+}}\Delta_{\mu}(k,\lambda)$ can actually go to infinity for some non-doubling measures, which means that our conditions are relatively demanding. 
	
It is rather straightforward to see that the following theorem remains valid by modifying the proof of Theorem \ref{thm:2.1}.  

\begin{thm}\label{thm:2.2}
		If ~$\Vert M\Vert_{L^1\mapsto L^{1,\infty}}< \infty$,  and 
	\[
	\lim_{\lambda \to 0^{+}}\lambda\mu(\{|x|>r:\frac{1}{\mu B(x,|x|-r)}>\lambda\})=\lim_{\lambda \to 0^{+}}\lambda\mu(\{x:\frac{1}{\mu B(x,|x|+r)}>\lambda\})=c_0
	\]
	for all $r>0$, then		
$
	\lim\limits_{\lambda \to 0^+}\lambda\mu(\{x: M_{\mu}\nu(x)>\lambda\})=c_{0}\nu(X)
$ for all finite measure $\nu$.
	\end{thm}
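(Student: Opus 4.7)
The plan is to mirror the proof of Theorem~\ref{thm:2.1}, replacing the multiplicative parameter $k$ by the additive translate $r$. By the $1$-homogeneity of $M_\mu$, it suffices to treat $\nu(X)=1$. Fix $\varepsilon\in(0,1)$, choose $r_\varepsilon>0$ with $\nu(B(0,r_\varepsilon))>1-\varepsilon$, pick any $R_\varepsilon>r_\varepsilon$, and split $\nu=\nu_1+\nu_2$ with $\nu_1:=\nu|_{B(0,r_\varepsilon)}$ (so $\nu_2(X)<\varepsilon$).

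For the upper bound, note that by the triangle inequality $B(x,s)\cap B(0,r_\varepsilon)=\emptyset$ whenever $s<|x|-r_\varepsilon$, so for $|x|>r_\varepsilon$
\[
M_\mu\nu_1(x) \leq \frac{\nu_1(X)}{\mu(B(x,|x|-r_\varepsilon))} \leq \frac{1}{\mu(B(x,|x|-r_\varepsilon))}.
\]
Combining this with the weak-type $(1,1)$ bound applied to $\nu_2$ and discarding the bounded region $\{|x|\leq R_\varepsilon\}$ yields
\[
\mu(\{M_\mu\nu>\lambda\}) \leq \mu(B(0,R_\varepsilon)) + \mu\bigl(\{|x|>r_\varepsilon:1/\mu(B(x,|x|-r_\varepsilon))>(1-\sqrt\varepsilon)\lambda\}\bigr) + \frac{C\sqrt\varepsilon}{\lambda}.
\]
Multiplying by $\lambda$, passing to $\limsup_{\lambda\to 0^+}$, and invoking the first hypothesis with $r=r_\varepsilon$ gives $\limsup_{\lambda\to 0^+}\lambda\mu(\{M_\mu\nu>\lambda\})\leq c_0/(1-\sqrt\varepsilon)+C\sqrt\varepsilon$; then $\varepsilon\to 0^+$ produces $\leq c_0$.

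For the lower bound, the triangle inequality yields the reverse containment $B(x,|x|+r_\varepsilon)\supseteq B(0,r_\varepsilon)$ for every $x$, so $\nu_1(B(x,|x|+r_\varepsilon))=\nu_1(X)\geq 1-\varepsilon$ and hence
\[
M_\mu\nu(x) \geq M_\mu\nu_1(x) \geq \frac{1-\varepsilon}{\mu(B(x,|x|+r_\varepsilon))}.
\]
Peeling off the contribution of $\{|x|\leq R_\varepsilon\}$ and applying the second hypothesis with $r=r_\varepsilon$ then yields $\liminf_{\lambda\to 0^+}\lambda\mu(\{M_\mu\nu>\lambda\})\geq (1-\varepsilon)c_0$, which tends to $c_0$ as $\varepsilon\to 0^+$.

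No genuinely new idea is required beyond the proof of Theorem~\ref{thm:2.1}; in fact the arithmetic simplifies because the hypotheses of Theorem~\ref{thm:2.2} are stated precisely in terms of the additive gauges $|x|\pm r$ that appear naturally in the envelope estimates for $M_\mu\nu_1$. The main ``obstacle'' is therefore purely cosmetic: keeping the $\pm\sqrt\varepsilon$ perturbations organized so that both the $\limsup$ and $\liminf$ pass cleanly to $c_0$ after the final $\varepsilon\to 0^+$ limit.
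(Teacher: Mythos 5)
Your proposal is correct and is essentially the modification of the proof of Theorem~\ref{thm:2.1} that the paper itself invokes (the paper gives no separate argument for Theorem~\ref{thm:2.2}, only the remark that the earlier proof adapts): the same $\nu=\nu_1+\nu_2$ splitting, the same envelope bounds $M_\mu\nu_1(x)\leq 1/\mu(B(x,|x|-r_\varepsilon))$ and $M_\mu\nu_1(x)\geq(1-\varepsilon)/\mu(B(x,|x|+r_\varepsilon))$, and the same weak-type control of the tail, with the only simplification being that the hypotheses are already phrased in the additive gauges so no conversion to $|x|/(1+\varepsilon)$ is needed. The details check out, including the harmless observation that the lower bound needs no weak-type correction term since $M_\mu\nu\geq M_\mu\nu_1$ pointwise.
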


 Let $f : [0,\infty) \to [0,\infty)$ be a non-increasing function
and let $m^n$ be Lebesgue measure on  Euclid space $\mathbb{R}^n$. Then the function $f$ defines a rotationally
invariant (or radial) measure $\mu$ via
\begin{equation}\label{eq:3}
	\mu(A)=\int_A f(|y|)dm^n(y).
\end{equation}

\begin{prop}
 Fix $n\in \mathbb{N}\backslash0$. Let
	$\mu$ be the radial measure defined via \eqref{eq:3}. If
	\begin{equation*}
		\lim\limits_{s\to\infty}\frac{\mu B(0,s)}{\mu B(se_1, s-r)}=\lim\limits_{s\to\infty}\frac{\mu B(0,s)}{\mu B(se_1, s+r)}=c_0
	\end{equation*}
for all $r>0$, then  $
\lim\limits_{\lambda \to 0}\lambda\mu(\{x: M_{\mu}\nu(x)>\lambda\})=c_{0}\nu(X)
$  for all finite  measure $\nu$.

\end{prop}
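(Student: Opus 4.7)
The plan is to deduce the claim from Theorem \ref{thm:2.2} by verifying its two hypotheses. Because $\mu$ is rotationally invariant, $\mu B(x,\rho)$ depends only on $|x|$ and $\rho$. Setting
$$h_{\pm}(s):=\mu B(se_1,\; s\pm r),$$
each quantity $\lambda\mu(\{x:\,1/\mu B(x,|x|\pm r)>\lambda\})$ reduces to $\lambda\mu(\{x:\,h_{\pm}(|x|)<1/\lambda\})$ (intersected with $\{|x|>r\}$ in the $-r$ case). The triangle inequality furnishes the monotonicity $B(s_1e_1, s_1\pm r)\subset B(s_2e_1, s_2\pm r)$ whenever $s_1\le s_2$, so $h_{\pm}$ is non-decreasing. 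Hence each radial sublevel set is a half-open interval, and its $\mu$-preimage has measure $\mu B(0, T_\lambda^{\pm})$, where $T_\lambda^{\pm}:=\sup\{s:\,h_{\pm}(s)<1/\lambda\}$; in the $-r$ case an additional $\mu B(0,r)$ gets subtracted, contributing $-\lambda\,\mu B(0,r)\to 0$.

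The core identity is
$$\lambda\,\mu B(0, T_\lambda^{\pm})\;=\;\frac{\mu B(0, T_\lambda^{\pm})}{\mu B(T_\lambda^{\pm}e_1,\; T_\lambda^{\pm}\pm r)}\,\cdot\,\lambda\, h_{\pm}(T_\lambda^{\pm}),$$
in which the first factor tends to $c_0$ by the radial hypothesis applied along $s=T_\lambda^{\pm}\to\infty$, while the second factor tends to $1$ by the very definition of the threshold. The divergence $T_\lambda^{\pm}\to\infty$ as $\lambda\to 0^+$ is forced by the hypothesis itself: boundedness of $h_{\pm}$ would force $\mu B(0,s)$ and $\mu B(se_1,s\pm r)$ to stabilize, and the two resulting limit ratios (corresponding to the half-spaces $\{y_1\geq\pm r\}$ via direct computation) cannot coincide for generic radial $f$, contradicting the standing assumption. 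Once both factors are controlled, Theorem \ref{thm:2.2} yields the conclusion immediately; its standing weak-$(1,1)$ assumption is automatic on $(\mathbb{R}^n,\|\cdot\|_2)$ for any locally finite Borel measure via the Besicovitch covering property.

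The main technical obstacle is that $h_{\pm}$ may have jumps, so $h_{\pm}(T_\lambda^{\pm})$ need not equal $1/\lambda$ exactly, and a subtlety also arises from the open/closed nature of the sublevel set. I would dispatch these points by a one-sided sandwich: for any $\eta>0$ one has $h_{\pm}(T_\lambda^{\pm}-\eta)\le 1/\lambda\le h_{\pm}(T_\lambda^{\pm}+\eta)$, and substituting either bound into the core identity yields both $\limsup$ and $\liminf$ equal to $c_0$, via the hypothesis along the perturbed sequences $T_\lambda^{\pm}\pm\eta\to\infty$ followed by $\eta\to 0$. This closes the estimate for both $h_+$ and $h_-$, completing the verification required to invoke Theorem \ref{thm:2.2}.
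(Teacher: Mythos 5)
Your proposal follows essentially the same route as the paper: radial symmetry turns each level set appearing in Theorem \ref{thm:2.2} into a ball $B(0,T_\lambda^{\pm})$ centered at the origin, and the substitution $s=T_\lambda^{\pm}$ converts the limit into the assumed ratio of ball measures (the paper simply writes $\phi_r^{-1}(1/\lambda)$ and changes variables, so your treatment of monotonicity, jumps, and the divergence of the threshold is if anything more careful than the original). The one loose end is the factor $\mu B(0,T_\lambda^{\pm})/\mu B(0,T_\lambda^{\pm}\mp\eta)$ that your sandwich produces when the hypothesis is applied along the perturbed sequence: you should either note that it tends to $1$ because $f$ is non-increasing, so that $\mu\bigl(B(0,s+\eta)\setminus B(0,s)\bigr)\leq C\eta(s+\eta)^{n-1}f(s)=o\bigl(\mu B(0,s)\bigr)$, or bypass it entirely by observing that absolute continuity of $\mu$ with respect to Lebesgue measure makes $h_{\pm}$ continuous, whence $h_{\pm}(T_\lambda^{\pm})=1/\lambda$ exactly and no sandwich is needed.
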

\begin{proof}
 So upon the suppositions, for all radial measure $\mu$ we have  $\mu B(x,|x|-r)=\mu B(|x|e_1, |x|-r)$ for $0<r<|x|$. Since this number depends only on $|x|$ and $r$, we can set $\phi_{r}(|x|):=\mu B(x,|x|-r).$ Hence
$\mu(\{|x|>r:\frac{1}{\mu B(x,|x|-r)}>\lambda\})=\mu B(0,\phi_{r}^{-1}(1/ \lambda))$. 	Now we use variable substitution $s=\phi_{r}^{-1}(1/ \lambda)$ to get
	\begin{equation*}
			\lim_{\lambda \to 0^{+}}\lambda\mu(\{|x|>r:\frac{1}{\mu B(x,|x|-r)}>\lambda\})=\lim\limits_{s\to\infty}\frac{\mu B(0,s)}{\mu B(se_1, s-r)}.
	\end{equation*}
The same arguments show that
\begin{equation*}
	\lim_{\lambda \to 0^{+}}\lambda\mu(\{x:\frac{1}{\mu B(x,|x|+r)}>\lambda\})=\lim\limits_{s\to\infty}\frac{\mu B(0,s)}{\mu B(se_1, s+r)}.
\end{equation*}	Thus after Theorem \ref{thm:2.2}, we prove the statement.
\end{proof}
	In particular, in the case of one dimension (the positive real axis), since every measure is radical, we get the following proposition.
	
		\begin{prop}\label{pr:2.4}
		Let m be one-dimensional Lebesgue measure and $\mu$ be a locally finite measure such that $d\mu=h\cdot \chi_{[0,\infty)}dm$ where $h$ is non-negative measurable function on $\mathbb{R}$ and $\chi_{[0,\infty)}$ is an indicator function of $[0,\infty)$. Denote  distribution function  $H(x)=\mu([0,x])$. If $H(\infty)=\infty$, for a given constant $c_0 \geq0$, then the following are equivalent{\rm :}
		\begin{enumerate}
			[\rm(i)]
			\item 	
			$\lim\limits_{r\to \infty}\frac{H(r)}{H(2r-y)}=c_0$ for all fixed $y\geq 0;$
			\item 		
			$\lim\limits_{\lambda \to 0^+}\lambda \mu(\{x\geq 0: M_{\mu}\nu(x)>\lambda\}) = c_0\nu(X)$ for every all finite measure $\nu$.
		\end{enumerate}
	\end{prop}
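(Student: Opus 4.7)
The strategy is to deduce Proposition~\ref{pr:2.4} from Theorem~\ref{thm:2.2}. For the (i)~$\Rightarrow$~(ii) direction I plan to verify that (i) implies both hypotheses of Theorem~\ref{thm:2.2}, and for (ii)~$\Rightarrow$~(i) I will test (ii) against the Dirac masses $\nu = \delta_y$.

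For (i)~$\Rightarrow$~(ii), the key observation is that on $[0,\infty)$ the relevant balls are intervals, so for $x > r > 0$,
\[
\mu B(x, x - r) = H(2x-r) - H(r-), \qquad \mu B(x, x + r) = H(2x + r).
\]
Each of these is non-decreasing in $x$ and tends to $\infty$ (since $H(\infty) = \infty$). Inverting these monotone functions and making the change of variable $\lambda \leftrightarrow 1/(\text{ball measure})$ shows
\[
\lim_{\lambda\to 0^+}\lambda\mu\bigl\{x>r:\tfrac{1}{\mu B(x, x - r)} > \lambda\bigr\} = \lim_{x\to\infty}\frac{H(x)}{H(2x-r)}
\]
and the analogous identity with $2x + r$ in the denominator. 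The first of these equals $c_0$ immediately from (i) with $y = r$. The second, namely $\lim_{x\to\infty} H(x)/H(2x+r)$, is the main obstacle, since (i) only supplies ratios of the form $H(r)/H(2r - y)$ with $y \geq 0$. I plan to handle it by a case split: the upper bound $\limsup \leq c_0$ is immediate from $H(x)/H(2x+r) \leq H(x)/H(2x) \to c_0$ (using (i) at $y = 0$). For the matching lower bound, the case $c_0 = 0$ is then automatic, and when $c_0 > 0$ the identities $H(x)/H(2x) \to c_0$ and $H(x)/H(2x-r) \to c_0$ force $H(2x-r)/H(2x) \to 1$; multiplying
\[
\frac{H(x-r/2)}{H(2x)} = \frac{H(x-r/2)}{H(2x-r)}\cdot \frac{H(2x-r)}{H(2x)}
\]
and observing that the first factor tends to $c_0$ by (i) at $y=0$ (after substituting $v = x - r/2$) gives $H(x-r/2)/H(2x) \to c_0$, i.e.\ $H(w)/H(2w+r) \to c_0$ after relabelling $w = x - r/2$. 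Theorem~\ref{thm:2.2} then delivers (ii).

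For (ii)~$\Rightarrow$~(i), I simply apply (ii) to $\nu = \delta_y$ for each fixed $y \geq 0$. For $x > y$ the smallest ball centred at $x$ that contains $y$ has radius $x - y$, so $M_\mu \delta_y(x) = 1/\mu B(x, x-y)$; the contribution to $\mu\{M_\mu \delta_y > \lambda\}$ from $x \in [0, y]$ is bounded by the constant $H(y)$ and disappears after multiplication by $\lambda$. The contribution from $x > y$ reduces, by exactly the change of variable used above, to $\lim_{x\to\infty} H(x)/H(2x - y)$. Since $\delta_y(X) = 1$, (ii) forces this limit to equal $c_0$, which is precisely (i).
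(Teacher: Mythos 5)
Your proof is correct and follows essentially the same route as the paper: the implication (i)$\Rightarrow$(ii) is obtained by feeding the ratio conditions into Theorem~\ref{thm:2.2} (the paper does this via the radial-measure proposition, which in one dimension reduces to exactly the identities $\lim_{\lambda\to 0^+}\lambda\mu\{x>r:\frac{1}{\mu B(x,x-r)}>\lambda\}=\lim_{x\to\infty}H(x)/H(2x-r)$ and its $2x+r$ analogue that you derive), and (ii)$\Rightarrow$(i) is obtained by testing on $\nu=\delta_y$, which is precisely the computation the paper carries out in its proof of Theorem~\ref{thm:1.1}. The one place where you go beyond the paper is the bridging argument showing that condition (i), which only controls ratios of the form $H(x)/H(2x-y)$, also forces $H(x)/H(2x+r)\to c_0$: the paper invokes its radial proposition without addressing this mismatch between the two sets of hypotheses, whereas your case split on $c_0=0$ versus $c_0>0$, using $H(2x-r)/H(2x)\to 1$ and the substitution $w=x-r/2$, supplies the missing step correctly. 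This is a worthwhile addition rather than a deviation; the rest of the argument (discarding the bounded contribution $H(y)$ from $[0,y]$, the monotone change of variables, which is legitimate here since $H$ is continuous because $d\mu=h\,dm$) matches the paper's level of detail.
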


Now we can use Proposition \ref{pr:2.4} to prove Theorem \ref{thm:1.1}.

\begin{proof}[Proof of Theorem \ref{thm:1.1}.]
		First we note that $M_{\mu}\delta_{y}(x)=\frac{1}{\mu(B(x,|x-y|))}$ for  all Dirac measure $\delta_{y}, y\geq 0$.  Since  $\mu(B(x,\vert x-y\vert))=\int_{y}^{2x-y}h(z)dz$ for $x>y\geq 0$, we deduce
	\begin{align*}
		&    		\lambda \mu(\{x>y: M_{\mu}\delta_{y}(x)>\lambda\})
		=\lambda \mu(\{x>y:\int_{y}^{2x-y}h(z)dz<\frac{1}{\lambda}\}).
	\end{align*}
	From the assumption that $H(\infty)=\infty$, we obtain
	\begin{align*}
		\lim_{\lambda \to 0^+}\lambda\mu(\{x>y:	M_{\mu}\delta_{y}(x)>\lambda\})
		= \lim_{\lambda \to 0^+}\lambda\int_{y}^{\frac{y}{2}+\frac{1}{2}H^{-1}(\frac{1}{\lambda}+H(y))}h(z)dz.
	\end{align*}
	Now we use variable substitution $r= \frac{y}{2}+\frac{1}{2}H^{-1}(\frac{1}{\lambda}+H(y))$ to get
	\begin{align*}
		\lim_{\lambda \to 0^+}\lambda\mu(\{x>y:	M_{\mu}\delta_{y}(x)>\lambda\})
		= \lim_{r\to +\infty }\frac{H(r)-H(y)}{H(2r-y)-H(y)}.
	\end{align*}
	Finally from the locally finiteness of $\mu$, we have
	\begin{align*}
		\lim_{\lambda \to 0^+}\lambda\mu(\{x\geq 0: M_{\mu}\delta_{y}(x)>\lambda\})
		=&\lim_{\lambda \to 0^+}\lambda\mu(\{x> y: M_{\mu}\delta_{y}(x)>\lambda\})\\
		=&\lim_{r\to +\infty }\frac{H(r)}{H(2r-y)}
	\end{align*}for all fixed $y\geq 0$.
	According to Proposition \ref{pr:2.4}, the proof is complete.
\end{proof}
	
	\section{An counterexample showing failure of problem}\label{Se:3}
In this section, we will partly answer the question raised at the beginning in the exceptional circumstance of the positive real axis. In general, the answer is false.
	 
	  Throughout this section, the $(\mathbb{R},d)$
	will stand for one-dimensional Euclidean space.
	To state our result, we first give the proposition. If constant $c_0>0$, this proposition allows us to answer what kind of measure satisfies the limiting weak type behavior  of maximal operator  over $\delta_{0}$ must deduce that this measure satisfies the limiting weak type behavior of maximal operator over the class of finite measures. Results are summarized as follows.
	\begin{prop}\label{co:R}
		Let $\mu$ be a measure as assumed above. Suppose constant $c_0 > 0$, then the following statements are equivalent{\rm :}
		\begin{enumerate}
			[\rm(i)]
			\item 	
			$\lim\limits_{\lambda \to 0^+}\lambda \mu(\{x\geq 0: M_{\mu}\delta_0(x)>\lambda\}) = c_0$ and $\lim\limits_{r\to \infty}\frac{H(r-y)}{H(r)}=1$ for all $y\geq 0${\rm;}
			\item 		
			$\lim\limits_{\nu \to 0^+}\lambda \mu(\{x\geq 0: M_{\mu}\nu(x)>\lambda\}) = c_0\nu(X)$ for all finite measure $\nu$.
		\end{enumerate}
	\end{prop}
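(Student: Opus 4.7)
The plan is to reduce the statement, via Theorem \ref{thm:1.1}, to a purely analytic equivalence between ratios of the distribution function $H$. Indeed, the computation carried out in the proof of Theorem \ref{thm:1.1} shows that for each $y \geq 0$,
\[
\lim_{\lambda \to 0^+}\lambda\,\mu(\{x \geq 0 : M_\mu \delta_y(x) > \lambda\}) = \lim_{r \to \infty} \frac{H(r)}{H(2r - y)},
\]
under the standing assumption $H(\infty) = \infty$; both (i) and (ii) force this, since $c_0 > 0$ is incompatible with a finite total mass $\mu([0, \infty))$. Combined with Theorem \ref{thm:1.1}, condition (ii) is therefore equivalent to the family of limits $\lim_{r \to \infty} H(r)/H(2r - y) = c_0$ holding for every $y \geq 0$, while condition (i) rewrites as $\lim_{r \to \infty} H(r)/H(2r) = c_0$ together with $\lim_{r \to \infty} H(r-y)/H(r) = 1$ for every $y \geq 0$.

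For the implication (i) $\Rightarrow$ (ii), I would factor
\[
\frac{H(r)}{H(2r - y)} = \frac{H(r)}{H(2r)} \cdot \frac{H(2r)}{H(2r - y)}.
\]
The first factor tends to $c_0$ by the first half of (i). The second factor is the reciprocal of $H(2r - y)/H(2r)$, which tends to $1$ by the second half of (i) applied along $r' = 2r \to \infty$. Multiplying gives the required limit, uniformly in the choice of $y \geq 0$.

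For the reverse implication (ii) $\Rightarrow$ (i), specializing $y = 0$ in the reformulation of (ii) yields $\lim_{r \to \infty} H(r)/H(2r) = c_0$, which is the first half of (i). For the second half, the key identity is
\[
\frac{H(2r - y)}{H(2r)} = \frac{H(r)/H(2r)}{H(r)/H(2r - y)} \xrightarrow[r \to \infty]{} \frac{c_0}{c_0} = 1,
\]
which uses $c_0 > 0$ essentially. Letting $s = 2r$, which sweeps over all sufficiently large reals as $r \to \infty$, gives $H(s - y)/H(s) \to 1$ for every $y \geq 0$, completing (i).

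I do not foresee a serious obstacle here, as the argument is manipulation of limits once Theorem \ref{thm:1.1} has been used to translate the original weak-type statements into ratio conditions on $H$. The only delicate point worth flagging is that the hypothesis $c_0 > 0$ is indispensable: the reverse direction divides by $c_0$, and without it the second half of (i) genuinely fails to follow from (ii), which is precisely why the proposition is stated under $c_0 > 0$.
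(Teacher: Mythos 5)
Your proposal is correct and follows exactly the route the paper intends: Proposition \ref{co:R} is derived from Proposition \ref{pr:2.4} (equivalently, Theorem \ref{thm:1.1}), which converts condition (ii) into the family of ratio limits $\lim_{r\to\infty} H(r)/H(2r-y)=c_0$, and the remaining step is the elementary factorization $\frac{H(r)}{H(2r-y)}=\frac{H(r)}{H(2r)}\cdot\frac{H(2r)}{H(2r-y)}$ together with the observation that $c_0>0$ permits division in the reverse direction. Your explicit remark that $c_0>0$ forces $H(\infty)=\infty$ (so that Proposition \ref{pr:2.4} is applicable) is a detail the paper leaves implicit, and is handled correctly.
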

	Using Proposition \ref{co:R}, the limiting weak-type behaviors for
	maximal operator associated with several special measures are presented immediately.
	\begin{ex}
		Assume $h$ is a  non-negative measurable periodic function on $[0,\infty)$ and  $\mu$ is a measure
		determined by $d\mu=hdm$.  For all finite measure $\nu$, then
		\begin{equation*}
			\lim\limits_{\lambda \to 0^+}\lambda \mu(\{x\geq 0: M_{\mu}\nu(x)>\lambda\}) = \frac{1}{2} \nu(X).
		\end{equation*}
	\end{ex}
	\begin{ex}
		Let $\mu$ be a measure
		determined by $d\mu(x)=\frac{1}{x+1}\cdot \chi_{[0,\infty)}(x)dm(x)$.  For all finite measure $\nu$, then
		\begin{equation*}
			\lim\limits_{\lambda \to 0^+}\lambda \mu(\{x\geq 0: M_{\mu}\nu(x)>\lambda\}) = \nu(X).
		\end{equation*}
	\end{ex}
	In addition, if $c_0 =0$, we go back to Proposition \ref{pr:2.4} and an example of a non-doubling measure is given here.
	\begin{ex}
		Let $\mu$ be a measure
		determined by $d\mu(x)=e^x\cdot \chi_{[0,\infty)}(x)dm(x)$,	where $e$ denotes the natural exponential constant.  For all finite measure $\nu$, then
		\begin{equation*}
			\lim\limits_{\lambda \to 0^+}\lambda \mu(\{x\geq 0: M_{\mu}\nu(x)>\lambda\}) = 0.
		\end{equation*}
	\end{ex}
	The results in the rest of this section might be helpful to show that the criterion of Proposition \ref{co:R}  cannot be reduced to the existence of a limiting weak type behavior for the maximal operator on one of the particular Dirac deltas. We provide an example of the anomalies that arise in the absence of assumption $\lim\limits_{r\to \infty}\frac{H(r-y)}{H(r)}=1$ for all $y\geq 0$. To do this, we construct a function firstly.
	\begin{lem}\label{lem:3.5}
		There is a monotone increasing real continuous  function $G$
		on $[0,+\infty)$, such that $G(x)$ satisfies the following properties{\rm :}
		\begin{enumerate}[\rm(1)]
			\item $\varliminf\limits_{x\to \infty} \left(G(x+1)-G(x)\right) = 0$ and $\varlimsup\limits_{x\to \infty} \left(G(x+1)-G(x)\right) = 1${\rm ;}\label{eq:985}
			\item $\lim\limits_{x\to \infty} \left(G(x)-G(2x)\right) = -1.$\label{eq:211}
		\end{enumerate}
	\end{lem}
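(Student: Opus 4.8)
The plan is to pass to the doubling scale. Writing $x=2^{t}$ and $F(t):=G(2^{t})$, monotonicity of $G$ is equivalent to that of $F$, property \eqref{eq:211} becomes $F(t+1)-F(t)\to1$, and, since $2^{t}+1=2^{\,t+\delta(t)}$ with $\delta(t)=\log_{2}(1+2^{-t})\to0$, property \eqref{eq:985} becomes a statement about the increments of $F$ over the \emph{shrinking} windows $[t,t+\delta(t)]$. Thus I would try to manufacture $F$ as $t$ plus a bounded oscillation whose graph is flat on long stretches (to force $\varliminf=0$) but occasionally rises by almost $1$ across a single unit $x$-interval (to force $\varlimsup=1$), while arranging that every dyadic window carries total increment tending to $1$. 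Since $G(2x)-G(x)\to1$ forces $G(2^{n})\approx n$, the background growth is necessarily that of $\log_{2}x$, so the construction is really a perturbation of $\log_{2}$.

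Concretely I would first fix a sparse sequence $x_{1}<x_{2}<\cdots\to\infty$ and let $G$ perform a steep continuous rise of height $\to1$ across each $[x_{j},x_{j}+1]$; second, keep $G$ essentially constant on the gaps, which keeps all unit increments in $[0,1]$ and produces $0$ and $1$ as the two subsequential limits of $G(x+1)-G(x)$; third, calibrate the heights and spacing so that $\int_{x}^{2x}dG\to1$. Continuity and monotonicity are then automatic, so the entire content sits in this last calibration.

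The step I expect to be the genuine obstacle is precisely property \eqref{eq:211}, because it must hold for \emph{every} $x\to\infty$, in particular for the windows $[x,2x]$ whose endpoints straddle a steep riser. A direct test of the obvious candidate --- risers of width $1$ and height $1$ placed at the points $2^{n}$ --- already fails: taking $x=2^{\,n-1}+\tfrac12$ gives $G(x)=(n-1)+\tfrac12$ and $G(2x)=G(2^{n}+1)=n+1$, so $G(2x)-G(x)=\tfrac32$. Hence $G(2x)-G(x)$ develops a ``tent'' of height $\tfrac32$ near every $2^{\,n-1}$ and cannot converge. Demanding convergence forces a synchronisation between the riser near $p$ and the riser near $p/2$ --- so that as the right endpoint climbs one, the left endpoint climbs the other at a matching rate --- and a short computation shows this synchronisation can hold only if the riser widths grow proportionally to their location.

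This is the decisive tension, and where I would concentrate all the effort. I would attempt a self-similar cascade, placing risers at the dyadic points with widths scaling like their location, and verify the straddling windows by hand. That does make $G(2x)-G(x)$ converge; but, exactly as the width-versus-location relation predicts, a riser of width comparable to its location $p$ has slope $\sim1/p$, so its contribution to $G(x+1)-G(x)$ tends to $0$ and the $\varlimsup$ in \eqref{eq:985} collapses. In other words, the naive staircase satisfies \eqref{eq:985} but not \eqref{eq:211}, while the synchronised cascade satisfies \eqref{eq:211} but not \eqref{eq:985}. I would therefore treat the simultaneous realisation of steep \emph{unit}-scale risers at arbitrarily large $x$ (needed for \eqref{eq:985}) together with location-proportional widths (forced by \eqref{eq:211}) as the crux of the whole lemma, and I would scrutinise any proposed $G$ at exactly these straddling scales before accepting that it meets both requirements.
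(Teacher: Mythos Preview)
Your analysis of the difficulty is sharp and correct: the naive staircase gives \eqref{eq:985} but not \eqref{eq:211}, while a self-similar cascade with location-proportional riser widths gives \eqref{eq:211} but kills the $\varlimsup$ in \eqref{eq:985}. You have located exactly the tension the lemma is designed to exhibit. However, your proposal stops at this diagnosis and never produces a function $G$; it is a discussion of obstructions, not a construction. As written it is not a proof.

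The paper resolves the tension you identified by interleaving the two regimes in blocks. Each block $\mathcal{S}_l$ begins with a single steep riser of width exactly $1$ and height exactly $1$ (the segment $A_0^{(l)}B_0^{(l)}$), which is what supplies $\varlimsup\bigl(G(x+1)-G(x)\bigr)=1$ infinitely often. The remainder of the block is precisely your ``synchronised cascade'': risers $A_m^{(l)}B_m^{(l)}$ of width $2^m$ (proportional to their location $2^m x_{l-1}$), so that doubling $x$ maps one riser onto the next and $G(2x)-G(x)$ stays close to $1$. The heights of these cascade risers are trimmed by amounts $\sum_{j=1}^{m}\tfrac{1}{n_l+j}$, with tiny catch-up segments $B_m^{(l)}C_m^{(l)}$, so that the cumulative defect across the block is exactly compensated. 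The block lengths grow (because $n_l\to\infty$ forces $k_{n_l}\to\infty$), so the single unit-width riser at each block boundary contributes a vanishing error to $G(2x)-G(x)$ when a doubling window straddles it. In short: the cascade handles \eqref{eq:211} in bulk, the isolated unit risers handle the $\varlimsup$ in \eqref{eq:985}, and the growing block length makes the seams negligible. This is the missing idea in your proposal.
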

	\begin{proof}
		We prove the lemma by constructing an example.
		For any large integer $n>1$, we use the symbol $k_{n}$ to denote the first ordinal satisfying $\sum_{i=0}^{k_{n}}\frac{1}{n+i} > 1$.
		Fixed real number $x_0\geq1$ and a large integer $n_1\geq1$,
		we next construct $G(x)$ when $x_0\leq x\leq2^{k_{n_1}}x_0+2^{k_{n_1}}$.
		Let the points $A_0^{(1)}=(x_0, 0)$ and $B_0^{(1)}=(x_0+1,1)$, we connect the straight lines $A_0^{(1)}B_0^{(1)}$.
		Set $C_0^{(1)}:=A_1=(2x_0,1)$, $B_1^{(1)}=(2x_0+2,2-\frac1n)$ and $C_1^{(1)}=(2x_0+2+\frac{1}{2^{k_{n_1}-1}},2)$.
		For any $1\leq m\leq k_{n_1}-1$, we denote that
		$
		A_m^{(1)}=(2^mx_0,m)$,\;$B_m^{(1)}=(2^mx_0+2^m,m+1-\sum_{j=1}^m\frac{1}{n+j})$, $C_m^{(1)}=(2^mx_0+2^m+\frac{1}{2^{k_{n_1}-m}},m+1),
		$
		$
		A_{k_{n_1}}^{(1)}=(2^{k_{n_1}}x_0,k_{n_1}+1)$ and $B_{k_{n_1}}^{(1)}=(2^{k_{n_1}}x_0+2^{k_{n_1}},{k_{n_1}}).
		$
		We now connect the straight lines $A_m^{(1)}B_m^{(1)}, B_m^{(1)}C_m^{(1)}$ and $C_m^{(1)}A_{m+1}^{(1)}$, then we get that the image of $G(x)$ on $x_0\leq x\leq2^{k_{n_1}}x_0+2^{k_{n_1}}$ is
		\begin{equation*}\mathcal{S}_1:=\cup_{m=0}^{k_{n_1}-1} \{A_m^{(1)}B_m^{(1)},B_m^{(1)}C_m^{(1)}, C_m^{(1)}A_{m+1}^{(1)}\}\cup \{A_{k_{n_1}}^{(1)}B_{k_{n_1}}^{(1)}\}.
		\end{equation*}
		Taking $x_1=2^{k_{n_1}}x_0+2^{k_{n_1}}$ and $n_2=n_1+k_{n_1}$, by the above structure, we obtain that the image of $G(x)$ on $x_1\leq x\leq2^{k_{n_2}}x_1+2^{k_{n_2}}$ is \begin{equation*}\mathcal{S}_2:=\cup_{m=0}^{k_{n_2}-1} \{A_m^{(2)}B_m^{(2)},B_m^{(2)}C_m^{(2)}, C_m^{(2)}A_{m+1}^{(2)}\}\cup \{A_{k_{n_1}}^{(2)}B_{k_{n_1}}^{(2)}\}.
		\end{equation*}
		By induction, the image of $G(x)$ on $[x_0,+\infty)$ is $\cup_{j=1}^\infty\mathcal{S}_j$.
		
		In the following, we show that the above $G(x)$ satisfies the conditions (1) and (2).
		It is easy to see that the condition (1) holds.
		We now prove $G(x)$ satisfies (2).
		We take a large positive number $x\in\mathbb{R}$ and $(x,G(x))\in \mathcal{S}_l$ for some integer $l>1$.
		This implies that there exists an integer $m$ such that the point $(x,G(x))$ belongs to the lines $A_m^{(l)}B_m^{(l)}$ or $B_m^{(l)}C_m^{(l)}$ or $C_m^{(l)}A_{m+1}^{(l)}$.
		We prove the lemma by the following cases.
		\par \textbf{Case 1}: $(x,G(x))\in A_m^{(l)}B_m^{(l)}$.
		\par
		Note that
		\begin{equation*}
			A_m^{(l)}=(2^mx_l,s),\;\;B_m^{(l)}=(2^mx_l+2^m,s+1-\sum_{j=1}^m\frac{1}{n_l+j})
		\end{equation*}
		for some integer $s$.
		We know
		$(2x,G(2x))\in A_{m+1}^{(l)}B_{m+1}^{(l)}$(if $m=k_{n_l}$, by the above structures, (2) follows).
		It is easy to check that
		\begin{equation*}
			|G(2x)-G(x)-1|\leq\frac{1}{n_l+m+1}\rightarrow0,
		\end{equation*}
		as $l\rightarrow\infty$.
		So (2) follows.
		\par \textbf{Case 2}: $(x,G(x))\in B_m^{(l)}C_m^{(l)}$.
		\par Recall that
		\begin{equation*}
			B_m^{(l)}=(2^mx_l+2^m,s-\sum_{j=1}^m\frac{1}{n_l+j}),\;\;C_m^{(l)}=(2^mx_l+2^m+\frac{1}{2^{k_{n_l}-m}},s).
		\end{equation*}
		If $m= k_{n_l}-1$, then $(2x,G(2x))\in B_0^{(l+1)}C_0^{(l+1)}$.
		Therefore
		\begin{equation*}
			|G(2x)-G(x)-1|\leq 1-\sum_{j=1}^{k_{n_l}-1}\frac{1}{n_l+j}\rightarrow0,
		\end{equation*}
		as $l\rightarrow\infty$, then (2) follows.
		If $m<k_{n_l}-1$, then $(2x,G(2x))\in B_{m+1}^{(l)}C_{m+1}^{(l)}$.
		Similar to the above arguments, one has
		\begin{equation*}
			|G(2x)-G(x)-1|\leq \frac{1}{n_l+m}\rightarrow0,
		\end{equation*}
		as $l\rightarrow\infty$, so (2) follows.
		\par \textbf{Case 3}: $(x,G(x))\in C_m^{(l)}A_{m+1}^{(l)}$.
		\par This case is obvious.
		\par In summary, we prove the lemma.
	\end{proof}
	
	Through this lemma, we can prove that there exists an absolutely continuous measure  satisfying $
		\lim_{\lambda\to 0^+}\lambda \mu\{x\in \mathbb{R}^n: M_{\mu}\delta_{y}(x)>\lambda\}=c_0
	$ for one point $y$ but $\lim\limits_{\lambda\to 0^+}\lambda \mu\{x\in \mathbb{R}^n: M_{\mu}\nu(x)>\lambda\}=c_0\nu(X)$ for all finite measure $\nu$.
	\begin{prop}
		There exists an absolutely continuous measure $\mu$ satisfying the following properties{\rm:}
		\begin{enumerate}[\rm(1)]
			\item $\varlimsup_{\lambda \to 0^+}\lambda \mu(\{x\in X: |M_{\mu}\delta_{2}(x)|>\lambda\}) > e^{-1};$
			\item $\varliminf_{\lambda \to 0^+}\lambda \mu(\{x\in X: |M_{\mu}\delta_{2}(x)|>\lambda\}) = e^{-1};$
			\item $\lim_{\lambda \to 0^+}\lambda \mu(\{x\in X: |M_{\mu}\delta_{0}(x)|>\lambda\}) = e^{-1}.$
		\end{enumerate}
		where $e$ denotes the natural exponential constant.
	\end{prop}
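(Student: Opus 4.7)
The plan is to define $\mu$ as the absolutely continuous measure on $[0,\infty)$ with distribution function $H = e^{G}$ (up to an inessential additive constant to ensure $H(0)=0$), where $G$ is the function constructed in Lemma~\ref{lem:3.5}. The substitution used in the proof of Theorem~\ref{thm:1.1} gives, for each $\lambda > 0$ and $y \geq 0$,
\[
\lambda\,\mu(\{x\geq 0 : M_\mu \delta_y(x) > \lambda\}) = \frac{H(r)-H(y)}{H(2r-y)-H(y)}, \qquad r = \tfrac{y}{2} + \tfrac12 H^{-1}\!\bigl(\tfrac{1}{\lambda} + H(y)\bigr),
\]
and as $\lambda \to 0^+$ the parameter $r$ ranges to infinity through the left endpoints of the plateaus of $H$ and through the strict-increase points of $H$. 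Because $H(r) \to \infty$, the subtracted $H(y)$ is asymptotically negligible and the ratio equals $e^{G(r)-G(2r-y)}(1+o(1))$.

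Claim~(3) follows at once from property~(2) of Lemma~\ref{lem:3.5}: one has $G(r)-G(2r) \to -1$ along every sequence $r \to \infty$, so the limit is $e^{-1}$. For the $\varliminf$ lower bound in (2), monotonicity of $G$ forces the ratio $\geq H(r)/H(2r) \to e^{-1}$. For the matching upper bound I take $2r_k-2$ to be the right endpoint of the first $B$-$C$ spike in the $k$-th iteration of Lemma~\ref{lem:3.5}; then $r_k$ sits in the adjacent flat segment, and the explicit height values in the construction give $G(r_k)-G(2r_k-2) = -1$ exactly, so the ratio equals $e^{-1}$ along the corresponding $\lambda$'s. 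The chosen $2r_k-2$ is a point where $H$ is strictly increasing from the left, hence lies in the range of $r(\cdot)$.

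Claim~(1) uses a different subsequence: set $2r_k-2$ at the left endpoint of the tall $B$-$C$ spike of the $k$-th iteration, i.e.\ the spike whose total jump approaches $1$ by the definition of $k_{n_l}$ (partial harmonic sums from $n_l$ exceeding $1$). Then the unit interval $[2r_k-2,\,2r_k-1]$ contains that entire spike, whence $G(2r_k-1)-G(2r_k-2) \to 1$; combined with property~(2) and monotonicity of $G$, this yields $\liminf_k(G(r_k)-G(2r_k-2)) \geq 0$, so $\varlimsup_r e^{G(r)-G(2r-2)} \geq 1 > e^{-1}$. Again $2r_k-2$ is a strict-increase point of $H$, hence lies in the range of $r(\cdot)$.

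The main obstacle I anticipate is the bookkeeping: verifying in each case that the chosen $2r_k-2$ is genuinely in the range of the right-continuous inverse $T_\lambda := \inf\{t : H(t) \geq \tfrac{1}{\lambda}+H(y)\}$ (equivalently, that it is not in the interior of a plateau of $H$), together with tracking the explicit heights and widths from Lemma~\ref{lem:3.5} against the harmonic-sum estimate on $k_{n_l}$. These are routine calculations requiring some attention to the construction's notation, but contain no essentially new ideas.
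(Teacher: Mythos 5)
Your proposal is correct and follows essentially the same route as the paper: take $\mu$ with distribution function $H=e^{G}$ for the $G$ of Lemma~\ref{lem:3.5}, reduce via the substitution of Theorem~\ref{thm:1.1}'s proof to the behavior of $H(r)/H(2r-y)$ for $y=0,2$, and read off the three claims from properties (1) and (2) of the lemma. The paper's own proof is only a two-line citation of Lemma~\ref{lem:3.5} and Proposition~\ref{co:R}; your explicit subsequences (the flat segment $B_0^{(l)}C_0^{(l)}$ paired with the short spike $C_1^{(l)}$ for the $\varliminf$, the tall spike $B_{k_{n_l}-1}^{(l)}C_{k_{n_l}-1}^{(l)}$ of width $1/2$ for the $\varlimsup$) and your check that the chosen points $2r_k-2$ are first-hitting points of $H$ supply exactly the verification the paper leaves implicit.
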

	\begin{proof}
		To prove this proposition, let $H(x)=e^{G(x)},\,x\geq0$, where $G(x)$ is given in Lemma \ref{lem:3.5}. Since $G(x)$ is a positive  monotone increasing function, there exists a measure $\mu$ whose distribution function is $H(x)$. We can see that the measure is locally finite and absolutely continuous with respect to the Lebesgue measure. This follows from the properties \eqref{eq:985} and \eqref{eq:211} in Lemma \ref{lem:3.5} and  Proposition \ref{co:R}.
	\end{proof}
	
\section*{acknowledge}
We appreciate Dr. Lu very much for providing the proof of Lemma \ref{lem:3.5} on our manuscript.

\end{document}